\documentclass[11pt]{amsart}
\usepackage{epsfig,amsfonts,amsthm,amssymb,latexsym,amsmath}

\newcommand{\mymod}[3]{#1 \equiv #2 \kern -0.5em \pmod{#3}}
\newcommand{\mynotmod}[3]{#1 \not \equiv #2 \kern -0.6em \pmod{#3}}

\textwidth 13.1cm
\textheight 19.5cm
\oddsidemargin=0.7 true in
\evensidemargin=0.7 true in

\theoremstyle{plain}
\newtheorem{theorem}{Theorem}[section]

\newtheorem{proposition}[theorem]{Proposition}

\theoremstyle{remark}

\theoremstyle{definition}

\title{A note on Modified Third-order Jacobsthal numbers}
\vspace{5pt}


\author{\scriptsize Gamaliel Cerda-Morales}
\date{}

\begin{document}
\maketitle

\vspace{-20pt}
\begin{center}
{\footnotesize Departamento de Matem\'atica, Facultad de Ciencias F\'isicas y Matem\'aticas, Universidad de Concepci\'on, Casilla 160-C, Concepci\'on, Chile. \\
E-mails: gcerda@udec.cl 
}\end{center}

\vspace{5pt}

\begin{abstract}
Modified third-order Jacobsthal sequence is defined in this study. Some properties involving
this sequence, including the Binet-style formula and the generating function are also
presented.
\end{abstract}

\medskip
\noindent
\subjclass{\footnotesize {\bf Mathematical subject classification 2010:} 
11B37, 11B39, 11B83.}

\medskip
\noindent
\keywords{\footnotesize {\bf Keywords:} Recurrence relation, Modified third-order Jacobsthal numbers, Third-order Jacobsthal numbers.}
\medskip

\vspace{5pt}

\section{Introduction}\label{sec:1}
\setcounter{equation}{0}
The Jacobsthal numbers have many interesting properties and applications in many fields of science (see, e.g., \cite{Ba,Hor2,Hor3}). The Jacobsthal numbers $J_{n}$ are defined by the recurrence relation
\begin{equation}\label{e1}
J_{0}=0,\ J_{1}=1,\ J_{n+2}=J_{n+1}+2J_{n},\ n\geq0.
\end{equation}
Another important sequence is the Jacobsthal-Lucas sequence. This sequence is defined by the recurrence relation $j_{n+2}=j_{n+1}+2j_{n}$, where $j_{0}=2$ and $j_{1}=1$ (see, \cite{Hor3}).

In \cite{Cook-Bac} the Jacobsthal recurrence relation is extended to higher order recurrence relations and the basic list of identities provided by A. F. Horadam \cite{Hor3} is expanded and extended to several identities for some of the higher order cases. For example, the third-order Jacobsthal numbers, $\{J_{n}^{(3)}\}_{n\geq0}$, and third-order Jacobsthal-Lucas numbers, $\{j_{n}^{(3)}\}_{n\geq0}$, are defined by
\begin{equation}\label{e2}
J_{n+3}^{(3)}=J_{n+2}^{(3)}+J_{n+1}^{(3)}+2J_{n}^{(3)},\ J_{0}^{(3)}=0,\ J_{1}^{(3)}=J_{2}^{(3)}=1,\ n\geq0,
\end{equation}
and 
\begin{equation}\label{e3}
j_{n+3}^{(3)}=j_{n+2}^{(3)}+j_{n+1}^{(3)}+2j_{n}^{(3)},\ j_{0}^{(3)}=2,\ j_{1}^{(3)}=1,\ j_{2}^{(3)}=5,\ n\geq0,
\end{equation}
respectively.

Some of the following properties given for third-order Jacobsthal numbers and third-order Jacobsthal-Lucas numbers are used in this paper (for more details, see \cite{Cer,Cer1,Cer2,Cook-Bac}). Note that Eqs. (\ref{e7}) and (\ref{e12}) have been corrected in this paper, since they have been wrongly described in \cite{Cook-Bac}.
\begin{equation}\label{e4}
3J_{n}^{(3)}+j_{n}^{(3)}=2^{n+1},
\end{equation}
\begin{equation}\label{e5}
j_{n}^{(3)}-3J_{n}^{(3)}=2j_{n-3}^{(3)},\ n\geq3,
\end{equation}
\begin{equation}\label{ec5}
J_{n+2}^{(3)}-4J_{n}^{(3)}=\left\{ 
\begin{array}{ccc}
-2 & \textrm{if} & \mymod{n}{1}{3} \\ 
1 & \textrm{if} & \mynotmod{n}{1}{3}
\end{array}%
\right. ,
\end{equation}
\begin{equation}\label{e6}
j_{n}^{(3)}-4J_{n}^{(3)}=\left\{ 
\begin{array}{ccc}
2 & \textrm{if} & \mymod{n}{0}{3} \\ 
-3 & \textrm{if} & \mymod{n}{1}{3}\\ 
1 & \textrm{if} & \mymod{n}{2}{3}%
\end{array}%
\right. ,
\end{equation}
\begin{equation}\label{e7}
j_{n+1}^{(3)}+j_{n}^{(3)}=3J_{n+2}^{(3)},
\end{equation}
\begin{equation}\label{e8}
j_{n}^{(3)}-J_{n+2}^{(3)}=\left\{ 
\begin{array}{ccc}
1 & \textrm{if} & \mymod{n}{0}{3} \\ 
-1 & \textrm{if} & \mymod{n}{1}{3} \\ 
0 & \textrm{if} & \mymod{n}{2}{3}%
\end{array}%
\right. ,
\end{equation}
\begin{equation}\label{e9}
\left( j_{n-3}^{(3)}\right) ^{2}+3J_{n}^{(3)}j_{n}^{(3)}=4^{n},
\end{equation}
\begin{equation}\label{e10}
\sum\limits_{k=0}^{n}J_{k}^{(3)}=\left\{ 
\begin{array}{ccc}
J_{n+1}^{(3)} & \textrm{if} & \mynotmod{n}{0}{3} \\ 
J_{n+1}^{(3)}-1 & \textrm{if} & \mymod{n}{0}{3}%
\end{array}%
\right. 
\end{equation}
and
\begin{equation}\label{e12}
\left( j_{n}^{(3)}\right) ^{2}-9\left( J_{n}^{(3)}\right)^{2}=2^{n+2}j_{n-3}^{(3)},\ n\geq3.
\end{equation}

Using standard techniques for solving recurrence relations, the auxiliary equation, and its roots are given by 
$$x^{3}-x^{2}-x-2=0;\ x = 2,\ \textrm{and}\ x=\frac{-1\pm i\sqrt{3}}{2}.$$ 

Note that the latter two are the complex conjugate cube roots of unity. Call them $\omega_{1}$ and $\omega_{2}$, respectively. Thus the Binet formulas can be written as
\begin{equation}\label{b1}
J_{n}^{(3)}=\frac{2}{7}2^{n}-\frac{3+2i\sqrt{3}}{21}\omega_{1}^{n}-\frac{3-2i\sqrt{3}}{21}\omega_{2}^{n}
\end{equation}
and
\begin{equation}\label{b2}
j_{n}^{(3)}=\frac{8}{7}2^{n}+\frac{3+2i\sqrt{3}}{7}\omega_{1}^{n}+\frac{3-2i\sqrt{3}}{7}\omega_{2}^{n},
\end{equation}
respectively. Now, we use the notation
\begin{equation}\label{h1}
V_{n}^{(2)}=\frac{A\omega_{1}^{n}-B\omega_{2}^{n}}{\omega_{1}-\omega_{2}}=\left\{ 
\begin{array}{ccc}
2 & \textrm{if} & \mymod{n}{0}{3} \\ 
-3 & \textrm{if} & \mymod{n}{1}{3} \\ 
1& \textrm{if} & \mymod{n}{2}{3}
\end{array}%
\right. ,
\end{equation}
where $A=-3-2\omega_{2}$ and $B=-3-2\omega_{1}$. Furthermore, note that for all $n\geq0$ we have 
\begin{equation}\label{im}
V_{n+2}^{(2)}=-V_{n+1}^{(2)}-V_{n}^{(2)},\ V_{0}^{(2)}=2\ \textrm{and}\ V_{1}^{(2)}=-3.
\end{equation}

From the Binet formulas (\ref{b1}), (\ref{b2}) and Eq. (\ref{h1}), we have
\begin{equation}\label{h2}
J_{n}^{(3)}=\frac{1}{7}\left(2^{n+1}-V_{n}^{(2)}\right)\ \textrm{and}\ j_{n}^{(3)}=\frac{1}{7}\left(2^{n+3}+3V_{n}^{(2)}\right).
\end{equation}

Motivated essentially by the recent works \cite{Cook-Bac}, \cite{Cer} and \cite{Cer2}, in this paper we introduce the Modified third-order Jacobsthal sequences and we give some properties, including the Binet-style formula and the generating functions for these sequences. Some identities involving these sequences are also provided.

\section{The Modified Third-order Jacobsthal sequence, Binet's formula and the generating function}
\setcounter{equation}{0}
The principal goals of this section will be to define the Modified third-order Jacobsthal sequence and to present some elementary results involving it.

First of all, we define the Modified third-order Jacobsthal sequence, denoted by $\{K_{n}^{(3)}\}_{n\geq0}$, which first terms are $\{3,1,3,10,15,31,66,...\}$. This sequence is defined recursively by
\begin{equation}\label{mean}
K_{n+3}^{(3)}=K_{n+2}^{(3)}+K_{n+1}^{(3)}+2K_{n}^{(3)},
\end{equation}
with initial conditions $K_{0}^{(3)}=3$, $K_{1}^{(3)}=1$ and $K_{2}^{(3)}=3$. Note that $K_{n}^{(3)}=J_{n}^{(3)}+2J_{n-1}^{(3)}+6J_{n-2}^{(3)}$, where $J_{n}^{(3)}$ is the $n$-th third-order Jacobsthal number.

In order to find the generating function for the Modified third-order Jacobsthal sequence, we shall write the sequence as a power series where each term of the sequence correspond to coefficients of the series. As a consequence of the definition of generating function of a sequence, the generating
function associated to $\{K_{n}^{(3)}\}_{n\geq0}$, denoted by $\{g_{K_{n}^{(3)}}(t)\}$, is defined by $$g_{K_{n}^{(3)}}(t)=\sum_{n\geq 0}K_{n}^{(3)}t^{n}.$$ Consequently, we obtain the following result:

\begin{theorem}
The generating function for the Modified third-order Jacobsthal numbers $\{K_{n}^{(3)}\}_{n\geq0}$
is $g_{K_{n}^{(3)}}(t)=\frac{3-2t-t^{2}}{1-t-t^{2}-2t^{3}}$.
\end{theorem}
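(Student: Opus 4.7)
The plan is the standard generating-function manipulation for a linear recurrence: I set $g(t) := g_{K_{n}^{(3)}}(t) = \sum_{n\geq 0} K_{n}^{(3)} t^{n}$ and multiply by the reciprocal of the characteristic polynomial, namely $1 - t - t^{2} - 2t^{3}$. Concretely, I form the four series $g(t)$, $t\,g(t)$, $t^{2}g(t)$, $2t^{3}g(t)$ and compute $(1-t-t^{2}-2t^{3})g(t)$ by collecting coefficients of $t^{n}$.

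For each $n\geq 3$, the coefficient of $t^{n}$ in this product equals $K_{n}^{(3)} - K_{n-1}^{(3)} - K_{n-2}^{(3)} - 2 K_{n-3}^{(3)}$, which vanishes by the defining recurrence \eqref{mean}. The only surviving contributions therefore come from $n = 0, 1, 2$, and these are determined entirely by the initial conditions. Plugging in $K_{0}^{(3)} = 3$, $K_{1}^{(3)} = 1$, $K_{2}^{(3)} = 3$, the constant term is $K_{0}^{(3)} = 3$, the coefficient of $t$ is $K_{1}^{(3)} - K_{0}^{(3)} = -2$, and the coefficient of $t^{2}$ is $K_{2}^{(3)} - K_{1}^{(3)} - K_{0}^{(3)} = -1$. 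Thus
\[
(1-t-t^{2}-2t^{3})\,g(t) = 3 - 2t - t^{2},
\]
and dividing yields the claimed closed form.

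There is essentially no obstacle here: the argument is purely bookkeeping and relies only on the recurrence \eqref{mean} together with the three prescribed initial values. The only thing to be careful about is the sign of the $t$ and $t^{2}$ coefficients in the numerator, which is why I would write out the $n=1$ and $n=2$ contributions explicitly rather than quote them. Convergence of the formal power series is not an issue since the identity is an equality of formal power series in $\mathbb{Q}[[t]]$.
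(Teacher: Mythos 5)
Your proposal is correct and follows essentially the same route as the paper: both multiply $g(t)$ by $1-t-t^{2}-2t^{3}$, cancel all coefficients with $n\geq 3$ via the recurrence \eqref{mean}, and evaluate the surviving terms $K_{0}^{(3)}+(K_{1}^{(3)}-K_{0}^{(3)})t+(K_{2}^{(3)}-K_{1}^{(3)}-K_{0}^{(3)})t^{2}=3-2t-t^{2}$ from the initial conditions. Your write-up is in fact slightly more explicit than the paper's, which leaves the final numerical evaluation to the reader.
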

\begin{proof}
Using the definition of generating function, we have $g_{K_{n}^{(3)}}(t)=K_{0}^{(3)}+K_{1}^{(3)}t+K_{2}^{(3)}t^{2}+\cdots+K_{n}^{(3)}t^{n}+\cdots$. Multiplying both sides of this identity by $-t$, $-t^{2}$ and by $-2t^{3}$, and then from (\ref{mean}), we have $(1-t-t^{2}-2t^{3})g_{K_{n}^{(3)}}(t)=K_{0}^{(3)}+(K_{1}^{(3)}-K_{0}^{(3)})t+(K_{2}^{(3)}-K_{1}^{(3)}-K_{0}^{(3)})t^{2}$ and the result follows.
\end{proof}

The following result gives the Binet-style formula for $K_{n}^{(3)}$.
\begin{theorem}\label{binetmod}
For $n\geq 0$, we have $K_{n}^{(3)}=2^{n}+\omega_{1}^{n}+\omega_{2}^{n}=2^{n}+M_{n}^{(2)}$, where 
\begin{equation}\label{mod}
M_{n}^{(2)}=\left\{ 
\begin{array}{ccc}
2 & \textrm{if} & \mymod{n}{0}{3} \\ 
-1 & \textrm{if} & \mymod{n}{1,2}{3}
\end{array}
\right.
\end{equation}
and $\omega_{1}, \omega_{2}$ are the roots of the characteristic equation associated with the respective recurrence relations $x^{2}+x+1=0$.
\end{theorem}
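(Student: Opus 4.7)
The plan is to proceed by the standard method for solving a linear recurrence with constant coefficients. The characteristic polynomial of (\ref{mean}) is the same as for $J_n^{(3)}$, namely $x^3-x^2-x-2 = (x-2)(x^2+x+1)$, so its roots are $2,\omega_1,\omega_2$ with $\omega_1,\omega_2$ the nontrivial cube roots of unity. Hence every solution of the recurrence has the form $K_n^{(3)} = A\cdot 2^n + B\omega_1^n + C\omega_2^n$ for suitable constants $A,B,C$, and it suffices to determine these from the three initial values $K_0^{(3)}=3$, $K_1^{(3)}=1$, $K_2^{(3)}=3$.

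First I would set up the $3\times 3$ linear system
\[
A+B+C = 3,\qquad 2A+B\omega_1+C\omega_2 = 1,\qquad 4A+B\omega_1^2+C\omega_2^2 = 3,
\]
and simply verify that $A=B=C=1$ is a solution, using the identities $\omega_1+\omega_2=-1$ and $\omega_1\omega_2=1$ (so $\omega_1^2+\omega_2^2=(\omega_1+\omega_2)^2-2\omega_1\omega_2=-1$). Since the Vandermonde determinant of the three distinct roots $2,\omega_1,\omega_2$ is nonzero, this solution is unique. This yields the closed form $K_n^{(3)} = 2^n + \omega_1^n + \omega_2^n$.

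To obtain the piecewise formula, I would set $M_n^{(2)}:=\omega_1^n+\omega_2^n$ and evaluate it using $\omega_i^3=1$: the value depends only on $n\bmod 3$, giving $M_0^{(2)}=2$, $M_1^{(2)}=\omega_1+\omega_2=-1$, and $M_2^{(2)}=\omega_1^2+\omega_2^2=-1$, which matches (\ref{mod}).

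There is no real obstacle here; the argument is entirely routine once one observes that the characteristic polynomial factors as $(x-2)(x^2+x+1)$. The only mild point of care is guessing $A=B=C=1$ directly rather than inverting the Vandermonde system by hand, which keeps the verification to the three simple symmetric-function evaluations above.
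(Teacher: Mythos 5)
Your proposal is correct and follows essentially the same route as the paper: both write $K_n^{(3)}=a\,2^n+b\,\omega_1^n+c\,\omega_2^n$ from the three distinct roots of $x^3-x^2-x-2=(x-2)(x^2+x+1)$, determine $a=b=c=1$ from the initial values $3,1,3$, and read off the piecewise form of $M_n^{(2)}=\omega_1^n+\omega_2^n$ from $n\bmod 3$. You merely make explicit the uniqueness via the Vandermonde determinant and the symmetric-function evaluations, which the paper leaves implicit.
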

\begin{proof}
Since the characteristic equation has three distinct roots, the sequence $K_{n}^{(3)}=a2^{n}+b\omega_{1}^{n}+c\omega_{2}^{n}$ is the solution of the Eq. (\ref{mean}). Considering $n=0,1,2$ in this identity and solving this system of linear equations, we obtain a unique value for $a$, $b$ and $c$, which are, in this case, $a=b=c=1$. So, using these values in the expression of $K_{n}^{(3)}$ stated before, we get the required result.
\end{proof}

Using the fact that $\omega_{1}+\omega_{2}=-\omega_{1}\omega_{2}=-1$, we have 
\begin{equation}\label{mod1}
M_{n}^{(2)}=-\frac{1}{7}\left(4V_{n+1}^{(2)}-V_{n}^{(2)}\right)
\end{equation}
and $V_{n}^{(2)}$ as in Eq. (\ref{h1}). Then, $M_{n}^{(2)}=-M_{n-1}^{(2)}-M_{n-2}^{(2)}$, $M_{0}^{(2)}=2$ and $M_{1}^{(2)}=-1$. Furthermore,we easily obtain the identities stated in the following result:

\begin{proposition}\label{prop}
For a natural number $n$ and $m$, if $K_{n}^{(3)}$, $j_{n}^{(3)}$, and $K_{n}^{(3)}$ are, respectively,
the $n$-th third-order Jacobsthal, third-order Jacobsthal-Lucas and Modified third-order Jacobsthal numbers, then the following identities are true:
\begin{equation}\label{pp1}
147J_{n}^{(3)}=13K_{n}^{(3)}+48K_{n-1}^{(3)}+20K_{n-2}^{(3)},
\end{equation}
\begin{equation}\label{pp2}
6K_{n}^{(3)}=5j_{n}^{(3)}+3j_{n-1}^{(3)}-5j_{n-2}^{(3)},
\end{equation}
\begin{equation}\label{pp3}
49j_{n}^{(3)}=43K_{n}^{(3)}+8K_{n-1}^{(3)}+36K_{n-2}^{(3)},
\end{equation}
\begin{equation}\label{pp4}
K_{n}^{(3)}K_{m}^{(3)}+K_{n+1}^{(3)}K_{m+1}^{(3)}+K_{n+2}^{(3)}K_{m+2}^{(3)}=\left\lbrace \begin{array}{c}
21\cdot 2^{n+m}\\
+2^{n}\left(M_{m+1}^{(2)}+3M_{m+2}^{(2)}\right)\\
+2^{m}\left(M_{n+1}^{(2)}+3M_{n+2}^{(2)}\right)\\
+3(\omega_{1}^{n}\omega_{2}^{m}+\omega_{1}^{m}\omega_{2}^{n})
\end{array}
\right\rbrace ,
\end{equation}
\begin{equation}\label{pp5}
\left(K_{n}^{(3)}\right)^{2}+\left(K_{n+1}^{(3)}\right)^{2}+\left(K_{n+2}^{(3)}\right)^{2}=
21\cdot 2^{2n}+2^{n+1}\left(M_{n+1}^{(2)}+3M_{n+2}^{(2)}\right)+6,
\end{equation}
and $M_{n}^{(2)}$ as in Eq. (\ref{mod1}).
\end{proposition}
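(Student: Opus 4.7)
The backbone of every identity is the Binet formula of Theorem~\ref{binetmod}, $K_{n}^{(3)}=2^{n}+\omega_{1}^{n}+\omega_{2}^{n}$, combined with three structural facts that follow directly from the cube-root-of-unity nature of $\omega_{1},\omega_{2}$: (i) $\omega_{1}\omega_{2}=1$; (ii) $\omega_{j}^{3}=1$ for $j=1,2$, so the auxiliary sequence $M_{n}^{(2)}$ is $3$-periodic; and (iii) $M_{k}^{(2)}+M_{k+1}^{(2)}+M_{k+2}^{(2)}=0$ for every $k$, which is simply the recurrence recorded immediately after Theorem~\ref{binetmod}. I would state these three facts at the start of the proof and appeal to them repeatedly.

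For the linear identities (\ref{pp1})--(\ref{pp3}), the plan is to expand each side in the basis $\{2^{n},\omega_{1}^{n},\omega_{2}^{n}\}$ using (\ref{b1}), (\ref{b2}) and Theorem~\ref{binetmod}, then match coefficients. Using $\omega_{j}^{-1}=\omega_{j}^{2}$, the right-hand side of (\ref{pp1}) becomes $(13+24+5)2^{n}+\sum_{j=1}^{2}(13+48\omega_{j}^{2}+20\omega_{j})\omega_{j}^{n}$; a direct substitution of $\omega_{1}=(-1+i\sqrt{3})/2$ gives $13+48\omega_{j}^{2}+20\omega_{j}=-7(3\pm 2i\sqrt{3})$, which is exactly $147$ times the coefficients of $\omega_{j}^{n}$ in (\ref{b1}). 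The proofs of (\ref{pp2}) and (\ref{pp3}) follow the same template with different constants; together the three identities amount to one change of basis between the Binet coefficients of $J_{n}^{(3)}$, $j_{n}^{(3)}$ and $K_{n}^{(3)}$.

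The bilinear identity (\ref{pp4}) is where the three structural facts do the real work. Expanding $K_{n+k}^{(3)}K_{m+k}^{(3)}$ via Binet and summing over $k=0,1,2$ produces nine streams. The purely exponential stream contributes $2^{n+m}(1+4+16)=21\cdot 2^{n+m}$. The mixed stream $\sum_{k}2^{n+k}(\omega_{1}^{m+k}+\omega_{2}^{m+k})=2^{n}\bigl(M_{m}^{(2)}+2M_{m+1}^{(2)}+4M_{m+2}^{(2)}\bigr)$ collapses to $2^{n}\bigl(M_{m+1}^{(2)}+3M_{m+2}^{(2)}\bigr)$ after one application of (iii) to eliminate $M_{m}^{(2)}$; the symmetric stream in $n$ is handled identically. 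The two diagonal streams $\omega_{j}^{n+m+2k}$ summed over $k$ give $M_{n+m}^{(2)}+M_{n+m+2}^{(2)}+M_{n+m+4}^{(2)}$, which vanishes by (ii) followed by (iii) (writing $M_{n+m+4}^{(2)}=M_{n+m+1}^{(2)}$ and summing a full block of three). The remaining cross stream equals $(\omega_{1}^{n}\omega_{2}^{m}+\omega_{1}^{m}\omega_{2}^{n})\sum_{k=0}^{2}(\omega_{1}\omega_{2})^{k}=3(\omega_{1}^{n}\omega_{2}^{m}+\omega_{1}^{m}\omega_{2}^{n})$ by (i). Assembling these four surviving contributions reproduces (\ref{pp4}); specialising $m=n$ yields (\ref{pp5}), where the cross stream simplifies further to $6(\omega_{1}\omega_{2})^{n}=6$ by (i) once more.

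The main obstacle is organisational rather than technical: every step is elementary algebra, but one must be disciplined about invoking facts (i)--(iii) at the correct moments so that the nine-term expansion in (\ref{pp4}) collapses precisely to the four streams displayed in the statement. Working throughout with $M_{n}^{(2)}$ rather than substituting back via (\ref{mod1}) into $V_{n}^{(2)}$ keeps the computation short and avoids carrying the factor $1/7$ through intermediate steps.
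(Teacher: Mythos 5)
Your proposal is correct, and for the bilinear identities (\ref{pp4})--(\ref{pp5}) it coincides with the paper's argument: both expand $K_{n+k}^{(3)}K_{m+k}^{(3)}$ via the Binet formula of Theorem \ref{binetmod}, use $M_{k}^{(2)}+M_{k+1}^{(2)}+M_{k+2}^{(2)}=0$ to reduce $M_{m}^{(2)}+2M_{m+1}^{(2)}+4M_{m+2}^{(2)}$ to $M_{m+1}^{(2)}+3M_{m+2}^{(2)}$, and use $\omega_{1}\omega_{2}=1$ together with $3$-periodicity to collapse the remaining product terms to $3(\omega_{1}^{n}\omega_{2}^{m}+\omega_{1}^{m}\omega_{2}^{n})$, with $m=n$ then giving (\ref{pp5}). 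Where you genuinely diverge is in the linear identities (\ref{pp1})--(\ref{pp3}): the paper proves (\ref{pp1}) by strong induction on $n$, verifying $n=2$ and pushing the recurrence (\ref{mean}) through both sides, and asserts that (\ref{pp2}) and (\ref{pp3}) follow in the same manner; you instead expand both sides in the basis $\{2^{n},\omega_{1}^{n},\omega_{2}^{n}\}$ and match coefficients, e.g.\ $13+48\omega_{j}^{2}+20\omega_{j}=-7(3\pm 2i\sqrt{3})$ against $147$ times the Binet coefficients in (\ref{b1}), which checks out. Your route treats all three linear identities uniformly as one change of basis between the Binet coefficients of $J_{n}^{(3)}$, $j_{n}^{(3)}$ and $K_{n}^{(3)}$, and it sidesteps a small blemish in the paper's induction (a step that invokes the hypothesis at three consecutive indices strictly requires three base cases, not just $n=2$); the paper's induction, in exchange, stays entirely in integer arithmetic and never touches the complex roots. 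Both arguments are sound.
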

\begin{proof}
First, we will just prove Eqs. (\ref{pp1}) and (\ref{pp4}) since Eqs. (\ref{pp2}), (\ref{pp3}) and (\ref{pp5}) can be dealt with in the same manner. 

(\ref{pp1}): To prove Eq. (\ref{pp1}), we use induction on $n$. Let $n=2$, we get $$147J_{2}^{(3)}=147=13\cdot 3+48\cdot 1+20\cdot 3=13K_{2}^{(3)}+48K_{1}^{(3)}+20K_{0}^{(3)}.$$ Let us assume that $147J_{m}^{(3)}=13K_{m}^{(3)}+48K_{m-1}^{(3)}+20K_{m-2}^{(3)}$ is true for all values $m$ less than or equal $n$. Then, 
\begin{align*}
147J_{n+1}^{(3)}&=147\left(J_{n}^{(3)}+J_{n-1}^{(3)}+2J_{n-2}^{(3)}\right)\\
&=13K_{n}^{(3)}+48K_{n-1}^{(3)}+20K_{n-2}^{(3)}\\
&\ \ +13K_{n-1}^{(3)}+48K_{n-2}^{(3)}+20K_{n-3}^{(3)}\\
&\ \ +26K_{n-2}^{(3)}+96K_{n-3}^{(3)}+40K_{n-4}^{(3)}\\
&=13K_{n+1}^{(3)}+48K_{n}^{(3)}+20K_{n-1}^{(3)}.
\end{align*}

(\ref{pp4}): Using the the Binet formula of $K_{n}^{(3)}$ in Theorem \ref{binetmod}, we have
\begin{align*}
K_{n}^{(3)}K_{m}^{(3)}&+K_{n+1}^{(3)}K_{m+1}^{(3)}+K_{n+2}^{(3)}K_{m+2}^{(3)}\\
&=\left\lbrace \begin{array}{c}
\left(2^{n}+M_{n}^{(2)}\right)\left(2^{m}+M_{m}^{(2)}\right)\\
+\left(2^{n+1}+M_{n+1}^{(2)}\right)\left(2^{m+1}+M_{m+1}^{(2)}\right)\\
+\left(2^{n+2}+M_{n+2}^{(2)}\right)\left(2^{m+2}+M_{m+2}^{(2)}\right)
\end{array}
\right\rbrace .
\end{align*}
Then,
\begin{align*}
K_{n}^{(3)}K_{m}^{(3)}&+K_{n+1}^{(3)}K_{m+1}^{(3)}+K_{n+2}^{(3)}K_{m+2}^{(3)}\\
&=\left\lbrace \begin{array}{c}
21\cdot 2^{n+m}+2^{n}\left(M_{m}^{(2)}+2M_{m+1}^{(2)}+4M_{m+2}^{(2)}\right)\\
+2^{m}\left(M_{n}^{(2)}+2M_{n+1}^{(2)}+4M_{n+2}^{(2)}\right)\\
+M_{n}^{(2)}M_{m}^{(2)}+M_{n+1}^{(2)}M_{m+1}^{(2)}+M_{n+2}^{(2)}M_{m+2}^{(2)}
\end{array}
\right\rbrace\\
&=\left\lbrace \begin{array}{c}
21\cdot 2^{n+m}+2^{n}\left(M_{m+1}^{(2)}+3M_{m+2}^{(2)}\right)\\
+2^{m}\left(M_{n+1}^{(2)}+3M_{n+2}^{(2)}\right)\\
+3(\omega_{1}^{n}\omega_{2}^{m}+\omega_{1}^{m}\omega_{2}^{n})
\end{array}
\right\rbrace .
\end{align*}
Then, we obtain the Eq. (\ref{pp5}) if $m=n$ in Eq. (\ref{pp4}). 
\end{proof}

\section{Some identities involving this type of sequence}
\setcounter{equation}{0}
In this section, we state some identities related with these type of third-order sequence. As a consequence of the Binet formula of Theorem \ref{binetmod}, we get for this sequence the following interesting identities.

\begin{proposition}[Catalan's identities]\label{prop2}
For a natural numbers $n$, $s$, with $n\geq s$, if $K_{n}^{(3)}$ is the $n$-th Modified third-order Jacobsthal numbers, then the following identity $$K_{n+s}^{(3)}K_{n-s}^{(3)}-\left(K_{n}^{(3)}\right)^{2}=\left\lbrace \begin{array}{c}
2^{n}\left(2^{-s}-2^{s}\right)U_{s}^{(2)}M_{n+1}^{(2)}\\
- 2^{n}\left(2^{s}U_{s}^{(2)}+2+\left(2^{-s}+2^{s}\right)U_{s-1}^{(2)}\right)M_{n}^{(2)}\\
-3 \left(U_{s}^{(2)}\right)^{2}
\end{array}
\right\rbrace$$ is true, where $M_{n}^{(2)}$ as in Eq. (\ref{mod1}), $U_{n}^{(2)}=\frac{\omega_{1}^{n}-\omega_{2}^{n}}{\omega_{1}-\omega_{2}}$ and $\omega_{1}$, $\omega_{2}$ are the roots of the characteristic equation associated with the recurrence relation $x^{2}+x+1=0$.
\end{proposition}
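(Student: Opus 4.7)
The plan is to derive the identity directly from the Binet-style formula $K_n^{(3)} = 2^n + M_n^{(2)}$ of Theorem~\ref{binetmod}, where $M_n^{(2)} = \omega_1^n + \omega_2^n$ and the roots satisfy $\omega_1+\omega_2=-1$, $\omega_1\omega_2=1$. Expanding both $K_{n+s}^{(3)}K_{n-s}^{(3)}$ and $(K_n^{(3)})^2$ makes the $2^{2n}$ term cancel and splits the difference into two natural pieces: a ``mixed'' part
\[
A := 2^{n+s}M_{n-s}^{(2)} + 2^{n-s}M_{n+s}^{(2)} - 2^{n+1}M_n^{(2)},
\]
and a ``pure-root'' part $B := (\omega_1^{n+s}+\omega_2^{n+s})(\omega_1^{n-s}+\omega_2^{n-s}) - (\omega_1^n+\omega_2^n)^2$.

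The piece $B$ is immediate: multiplying out and using $(\omega_1\omega_2)^{n-s}=1$ reduces it to $\omega_1^{2s}+\omega_2^{2s}-2 = (\omega_1^s-\omega_2^s)^2 = -3(U_s^{(2)})^2$, which is precisely the last line of the claim. For $A$, I would first apply the standard product identities
\[
M_{n\pm s}^{(2)} = \tfrac12\bigl(M_n^{(2)}M_s^{(2)} \mp 3U_n^{(2)}U_s^{(2)}\bigr),
\]
obtained by expanding $(\omega_1^n\pm\omega_2^n)(\omega_1^s\pm\omega_2^s)$ with $\omega_1\omega_2=1$ and $(\omega_1-\omega_2)^2=-3$. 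Substituting and grouping transforms $A$ into
\[
2^{n-1}M_n^{(2)}\bigl[(2^s+2^{-s})M_s^{(2)}-4\bigr] + 3\cdot 2^{n-1}(2^s-2^{-s})U_n^{(2)}U_s^{(2)}.
\]

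The final step is a change of basis: from $\{M_n^{(2)},U_n^{(2)}\}$ to $\{M_n^{(2)},M_{n+1}^{(2)}\}$ on the $n$-side, and from $M_s^{(2)}$ to $\{U_s^{(2)},U_{s-1}^{(2)}\}$ on the $s$-side. From Binet one finds $M_{n+1}^{(2)} = -\tfrac12 M_n^{(2)} - \tfrac32 U_n^{(2)}$, hence $3U_n^{(2)} = -M_n^{(2)} - 2M_{n+1}^{(2)}$; and using $M_s^{(2)} = U_{s+1}^{(2)}-U_{s-1}^{(2)}$ together with the recurrence $U_{s+1}^{(2)}+U_s^{(2)}+U_{s-1}^{(2)}=0$ gives $M_s^{(2)} = -U_s^{(2)} - 2U_{s-1}^{(2)}$. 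Plugging these in, the coefficient of $M_{n+1}^{(2)}$ collapses to $2^n(2^{-s}-2^s)U_s^{(2)}$, while the coefficient of $M_n^{(2)}$ becomes $-2^n\bigl(2^s U_s^{(2)} + 2 + (2^s+2^{-s})U_{s-1}^{(2)}\bigr)$. Combining with $B$ yields the stated identity.

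The main obstacle is bookkeeping rather than insight: the intermediate form of $A$ contains coefficients $(2^s\pm 2^{-s})M_s^{(2)}$ and $(2^s\pm 2^{-s})U_s^{(2)}$ that must be carefully collected after substituting $M_s^{(2)} = -U_s^{(2)}-2U_{s-1}^{(2)}$, and one has to notice that $(2^s+2^{-s})+(2^s-2^{-s}) = 2\cdot 2^s$ is exactly what is needed to produce the $2^s U_s^{(2)}$ summand in the target grouping.
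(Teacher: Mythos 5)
Your proposal is correct and follows essentially the same route as the paper: expand via the Binet formula $K_{n}^{(3)}=2^{n}+M_{n}^{(2)}$, reduce the pure-root part to $M_{n+s}^{(2)}M_{n-s}^{(2)}-\left(M_{n}^{(2)}\right)^{2}=-3\left(U_{s}^{(2)}\right)^{2}$, and rewrite the mixed $2^{n}$-part in the basis $\{M_{n}^{(2)},M_{n+1}^{(2)}\}$ with coefficients built from $U_{s}^{(2)}$ and $U_{s-1}^{(2)}$. The only difference is in the intermediate bookkeeping: the paper substitutes the shift identity $M_{n\pm s}^{(2)}=U_{\pm s}^{(2)}M_{n+1}^{(2)}-U_{\pm s-1}^{(2)}M_{n}^{(2)}$ (together with $U_{-s}^{(2)}=-U_{s}^{(2)}$) directly, whereas you detour through the product formulas $M_{n\pm s}^{(2)}=\tfrac{1}{2}\bigl(M_{n}^{(2)}M_{s}^{(2)}\mp 3U_{n}^{(2)}U_{s}^{(2)}\bigr)$ and then change basis, which lands in the same place.
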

\begin{proof}
Using the Eq. (\ref{mod1}) of Proposition \ref{prop} and the Binet formula of $K_{n}^{(3)}$ in Theorem \ref{binetmod}, we have
\begin{align*}
K_{n+s}^{(3)}K_{n-s}^{(3)}-\left(K_{n}^{(3)}\right)^{2}&=\left\lbrace \begin{array}{c}
\left(2^{n+s}+M_{n+s}^{(2)}\right)\left(2^{n-s}+M_{n-s}^{(2)}\right)\\
+\left(2^{n}+M_{n}^{(2)}\right)^{2}
\end{array}
\right\rbrace\\
&=\left\lbrace \begin{array}{c}
2^{n}\left(2^{s}M_{n-s}^{(2)}-2M_{n}^{(2)}+2^{-s}M_{n+s}^{(2)}\right)\\
+M_{n+s}^{(2)}M_{n-s}^{(2)}-\left(M_{n}^{(2)}\right)^{2}
\end{array}
\right\rbrace.
\end{align*}
Using that $M_{n+s}^{(2)}=U_{s}^{(2)}M_{n+1}^{(2)}-U_{s-1}^{(2)}M_{n}^{(2)}$, $U_{s}^{(2)}=\frac{\omega_{1}^{s}-\omega_{2}^{s}}{\omega_{1}-\omega_{2}}$ and $U_{-s}^{(2)}=-U_{s}^{(2)}$. Then, we obtain
$$
K_{n+s}^{(3)}K_{n-s}^{(3)}-\left(K_{n}^{(3)}\right)^{2}=\left\lbrace \begin{array}{c}
2^{n}\left(2^{-s}-2^{s}\right)U_{s}^{(2)}M_{n+1}^{(2)}\\
- 2^{n}\left(2^{s}U_{s}^{(2)}+2+\left(2^{-s}+2^{s}\right)U_{s-1}^{(2)}\right)M_{n}^{(2)}\\
-3 \left(U_{s}^{(2)}\right)^{2}
\end{array}
\right\rbrace .$$ Hence the result.
\end{proof}

Note that for $s=1$ in Catalan's identity obtained, we get the Cassini identity for the Modified third-order Jacobsthal sequence. In fact, for $s=1$, the identity stated in Proposition \ref{prop2}, yields $$ K_{n+1}^{(3)}K_{n-1}^{(3)}-\left(K_{n}^{(3)}\right)^{2}=\left\lbrace \begin{array}{c}
2^{n}\left(2^{-1}-2^{1}\right)U_{1}^{(2)}M_{n+1}^{(2)}\\
- 2^{n}\left(2^{1}U_{1}^{(2)}+2+\left(2^{-1}+2^{1}\right)U_{0}^{(2)}\right)M_{n}^{(2)}\\
-3 \left(U_{1}^{(2)}\right)^{2}
\end{array}
\right\rbrace $$ and using one of the initial conditions of the sequence $\{U_{n}^{(2)}\}$ in Proposition \ref{prop2} we obtain the following result.

\begin{proposition}[Cassini's identities]
For a natural numbers $n$, if $K_{n}^{(3)}$ is the $n$-th Modified third-order Jacobsthal numbers, then the identity $$K_{n+1}^{(3)}K_{n-1}^{(3)}-\left(K_{n}^{(3)}\right)^{2}=2^{n-1}\left(3M_{n+2}^{(2)}-5M_{n}^{(2)}\right)-3$$ is true.
\end{proposition}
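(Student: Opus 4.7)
The strategy is simply to specialize the Catalan identity of Proposition \ref{prop2} at $s=1$ and simplify. First I would record the two initial values of the sequence $\{U_{n}^{(2)}\}$: directly from $U_{n}^{(2)}=\frac{\omega_{1}^{n}-\omega_{2}^{n}}{\omega_{1}-\omega_{2}}$ one reads off $U_{0}^{(2)}=0$ and $U_{1}^{(2)}=1$. These are the only data about $\{U_{n}^{(2)}\}$ that the $s=1$ instance requires.

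Substituting $s=1$, $U_{0}^{(2)}=0$, $U_{1}^{(2)}=1$ into the Catalan identity gives
\begin{align*}
K_{n+1}^{(3)}K_{n-1}^{(3)}-\left(K_{n}^{(3)}\right)^{2}
&= 2^{n}\bigl(2^{-1}-2\bigr)M_{n+1}^{(2)} - 2^{n}\bigl(2+2+0\bigr)M_{n}^{(2)} - 3\\
&= -\tfrac{3}{2}\cdot 2^{n}M_{n+1}^{(2)} - 4\cdot 2^{n}M_{n}^{(2)} - 3\\
&= -3\cdot 2^{n-1}M_{n+1}^{(2)} - 8\cdot 2^{n-1}M_{n}^{(2)} - 3.
\end{align*}

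To recast this in the form asserted by the proposition, I would invoke the recurrence $M_{n+2}^{(2)}=-M_{n+1}^{(2)}-M_{n}^{(2)}$ (established just after Theorem \ref{binetmod}) to eliminate $M_{n+1}^{(2)}$, namely $M_{n+1}^{(2)}=-M_{n+2}^{(2)}-M_{n}^{(2)}$. Substituting, the first two terms become
\[
-3\cdot 2^{n-1}\bigl(-M_{n+2}^{(2)}-M_{n}^{(2)}\bigr)-8\cdot 2^{n-1}M_{n}^{(2)}
= 3\cdot 2^{n-1}M_{n+2}^{(2)} -5\cdot 2^{n-1}M_{n}^{(2)}
= 2^{n-1}\bigl(3M_{n+2}^{(2)}-5M_{n}^{(2)}\bigr),
\]
and adding the trailing $-3$ yields the claimed formula.

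There is no real obstacle: Proposition \ref{prop2} does all of the work, and the only delicate step is an accurate evaluation of the coefficients at $s=1$ followed by a one-line application of the recurrence for $M_{n}^{(2)}$ to match the stated right-hand side.
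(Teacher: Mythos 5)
Your proposal is correct and follows exactly the paper's own route: the paper likewise obtains Cassini's identity by setting $s=1$ in the Catalan identity of Proposition \ref{prop2} and using $U_{0}^{(2)}=0$, $U_{1}^{(2)}=1$. You merely make explicit the final step (eliminating $M_{n+1}^{(2)}$ via $M_{n+1}^{(2)}=-M_{n+2}^{(2)}-M_{n}^{(2)}$) that the paper leaves implicit, and your algebra checks out.
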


The d'Ocagne identity can also be obtained using the Binet formula and in this case we obtain
\begin{proposition}[d'Ocagne's identities]
For a natural numbers $m$, $n$, with $m\geq n$ and $K_{n}^{(3)}$ is the $n$-th Modified third-order Jacobsthal number, then the following identity
$$K_{m+1}^{(3)}K_{n}^{(3)}-K_{m}^{(3)}K_{n+1}^{(3)}=\left\lbrace \begin{array}{c}
2^{m}\left(2M_{n}^{(2)}-M_{n+1}^{(2)}\right)\\
+ 2^{n}\left(M_{m+1}^{(2)}-2M_{m}^{(2)}\right)-3 U_{m-n}^{(2)}
\end{array}
\right\rbrace$$ is true.
\end{proposition}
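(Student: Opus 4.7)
My plan is to apply the Binet-style formula $K_{n}^{(3)}=2^{n}+M_{n}^{(2)}$ from Theorem \ref{binetmod} directly to each of the four factors and then subtract. Writing
$$K_{m+1}^{(3)}K_{n}^{(3)}-K_{m}^{(3)}K_{n+1}^{(3)}=(2^{m+1}+M_{m+1}^{(2)})(2^{n}+M_{n}^{(2)})-(2^{m}+M_{m}^{(2)})(2^{n+1}+M_{n+1}^{(2)}),$$
the two pure $2^{m+n+1}$ terms cancel, and the mixed power/modified-sequence terms regroup cleanly as $2^{m}(2M_{n}^{(2)}-M_{n+1}^{(2)})+2^{n}(M_{m+1}^{(2)}-2M_{m}^{(2)})$. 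What is left over is the cross-term
$$\Delta_{m,n}:=M_{m+1}^{(2)}M_{n}^{(2)}-M_{m}^{(2)}M_{n+1}^{(2)},$$
so the whole identity collapses to proving $\Delta_{m,n}=-3U_{m-n}^{(2)}$.

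For this key step I would use the other half of the Binet expression, namely $M_{n}^{(2)}=\omega_{1}^{n}+\omega_{2}^{n}$. Expanding both products and cancelling the two diagonal summands $\omega_{1}^{m+n+1}$ and $\omega_{2}^{m+n+1}$ leaves four mixed monomials which pair as
$$(\omega_{1}^{m+1}\omega_{2}^{n}-\omega_{1}^{m}\omega_{2}^{n+1})+(\omega_{2}^{m+1}\omega_{1}^{n}-\omega_{2}^{m}\omega_{1}^{n+1})=(\omega_{1}-\omega_{2})\bigl(\omega_{1}^{m}\omega_{2}^{n}-\omega_{1}^{n}\omega_{2}^{m}\bigr).$$
Since $\omega_{1},\omega_{2}$ are the roots of $x^{2}+x+1$, we have $\omega_{1}\omega_{2}=1$, so $(\omega_{1}\omega_{2})^{n}=1$ and the second factor pulls down to $\omega_{1}^{m-n}-\omega_{2}^{m-n}$. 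Combining this with the identity $(\omega_{1}-\omega_{2})^{2}=(\omega_{1}+\omega_{2})^{2}-4\omega_{1}\omega_{2}=1-4=-3$ gives exactly $\Delta_{m,n}=-3U_{m-n}^{(2)}$, as required.

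The argument is essentially a Binet-formula computation, and the only non-trivial move is recognising how to pair the four mixed monomials so as to expose the factor $\omega_{1}-\omega_{2}$ twice (once directly, once through the difference $\omega_{1}^{m-n}-\omega_{2}^{m-n}$). The relation $\omega_{1}\omega_{2}=1$ is what turns the leftover product $\omega_{1}^{m}\omega_{2}^{n}-\omega_{1}^{n}\omega_{2}^{m}$ into a clean scalar multiple of $U_{m-n}^{(2)}$; without this step the cross-term would not simplify to the stated form. An alternative route, avoiding $\omega_{1},\omega_{2}$ altogether, would be to invoke the two-term recurrence $M_{n+2}^{(2)}=-M_{n+1}^{(2)}-M_{n}^{(2)}$ and induct on $m$ with $n$ fixed, but the direct computation above is shorter and mirrors exactly the proofs of \eqref{pp4} and of Proposition \ref{prop2}.
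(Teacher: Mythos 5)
Your proposal is correct and takes the same route as the paper, whose proof is just the one-line instruction to apply the Binet-style formula $K_{n}^{(3)}=2^{n}+M_{n}^{(2)}$; you simply supply the computation the paper omits. The key cross-term reduction $M_{m+1}^{(2)}M_{n}^{(2)}-M_{m}^{(2)}M_{n+1}^{(2)}=(\omega_{1}-\omega_{2})^{2}U_{m-n}^{(2)}=-3U_{m-n}^{(2)}$, using $\omega_{1}\omega_{2}=1$ and $(\omega_{1}-\omega_{2})^{2}=-3$, checks out exactly.
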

\begin{proof}
Using the Eq. (\ref{mod1}) of Proposition \ref{prop} and the Eq. (\ref{binetmod}) of Theorem \ref{binetmod}, we get the required result.
\end{proof}

In addition, some formulae involving sums of terms of the Modified third-order Jacobsthal sequence will be provided in the following proposition.

\begin{proposition}
For a natural numbers $m$, $n$, with $n\geq m$, if $j_{n}^{(3)}$ and $K_{n}^{(3)}$ are, respectively, the $n$-th third-order Jacobsthal-Lucas and Modified third-order Jacobsthal numbers, then the following identities are true:
\begin{equation}\label{t1}
\sum_{s=m}^{n}K_{s}^{(3)}=\frac{1}{3}\left(K_{n+2}^{(3)}+2K_{n}^{(3)}+K_{m}^{(3)}-K_{m+2}^{(3)}\right),
\end{equation}
\begin{equation}\label{t2}
\sum_{s=0}^{n}K_{s}^{(3)}=\left\{ 
\begin{array}{ccc}
K_{n+1}^{(3)}+2 & \textrm{if} & \mymod{n}{0}{3} \\ 
K_{n+1}^{(3)}+1 & \textrm{if} & \mymod{n}{1}{3}\\
K_{n+1}^{(3)}-3 & \textrm{if} & \mymod{n}{2}{3}
\end{array}
\right. ,
\end{equation}
\begin{equation}\label{t3}
\sum_{s=0}^{n}j_{s}^{(3)}=\frac{1}{49}\left(16K_{n+3}^{(3)}-5K_{n+2}^{(3)}+2K_{n+1}^{(3)}\right)-1.
\end{equation}
\end{proposition}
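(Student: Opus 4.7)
The plan is to prove all three identities by first telescoping the common third-order recurrence and only afterwards invoking Binet's formula or the linear dependence between the sequences.

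For (\ref{t1}) I rewrite (\ref{mean}) as $2K_s^{(3)} = K_{s+3}^{(3)} - K_{s+2}^{(3)} - K_{s+1}^{(3)}$ and sum over $s = m, m+1, \dots, n$. Reindexing each shifted sum back onto the window $[m,n]$ leaves only boundary contributions, and the coefficient of $S := \sum_{s=m}^{n} K_s^{(3)}$ on the right-hand side collapses to $-1$, producing $3S = K_m^{(3)} - K_{m+2}^{(3)} + K_{n+3}^{(3)} - K_{n+1}^{(3)}$. One further use of (\ref{mean}) rewrites $K_{n+3}^{(3)} - K_{n+1}^{(3)}$ as $K_{n+2}^{(3)} + 2K_n^{(3)}$, producing exactly the asserted expression.

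For (\ref{t2}) I specialize (\ref{t1}) to $m=0$ and use the initial values $K_0^{(3)} = K_2^{(3)} = 3$ to reduce the right-hand side to $\tfrac{1}{3}(K_{n+2}^{(3)} + 2K_n^{(3)})$. To compare this with $K_{n+1}^{(3)}$, I substitute the Binet expression $K_n^{(3)} = 2^n + M_n^{(2)}$ from Theorem \ref{binetmod}: the $2^n$-parts on both sides match (each contributes $2^{n+1}$), and the residual $\tfrac{1}{3}(M_{n+2}^{(2)} + 2M_n^{(2)} - 3M_{n+1}^{(2)})$ is then evaluated case by case on $n \bmod 3$ using the table (\ref{mod}), yielding the three constants $+2$, $+1$, $-3$.

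For (\ref{t3}) the same telescoping applied to (\ref{e3}) (since $\{j_n^{(3)}\}$ obeys the same recurrence) gives $\sum_{s=0}^{n} j_s^{(3)} = \tfrac{1}{3}(j_{n+3}^{(3)} - j_{n+1}^{(3)}) + \tfrac{1}{3}(j_0^{(3)} - j_2^{(3)})$, and the values $j_0^{(3)}=2$, $j_2^{(3)}=5$ turn the last bracket into $-1$. To translate the $j$-difference into $K$'s, I apply identity (\ref{pp3}) at indices $n+3$ and $n+1$; this writes $49(j_{n+3}^{(3)} - j_{n+1}^{(3)})$ as a linear combination of $K_{n+3}^{(3)}, K_{n+2}^{(3)}, K_{n+1}^{(3)}, K_n^{(3)}, K_{n-1}^{(3)}$. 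Eliminating the two lowest-indexed terms through (\ref{mean}) should collapse the expression to $3\,(16K_{n+3}^{(3)} - 5K_{n+2}^{(3)} + 2K_{n+1}^{(3)})$; dividing by $3 \cdot 49$ before subtracting $1$ yields (\ref{t3}). The main obstacle is precisely this last step: two applications of (\ref{pp3}) produce five $K$-terms whose coefficients have no obvious shortcut, and the elimination of $K_n^{(3)}$ and $K_{n-1}^{(3)}$ via (\ref{mean}) must land exactly on the combination $16K_{n+3}^{(3)} - 5K_{n+2}^{(3)} + 2K_{n+1}^{(3)}$. The other two identities are essentially telescopic and require only careful bookkeeping of boundary terms.
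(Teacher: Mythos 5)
Your proposal is correct, and all three arguments go through; in particular the step you flagged in (\ref{t3}) does close. The paper's proofs draw on the same toolbox as yours (reindexing sums via the recurrence (\ref{mean}), the Binet decomposition $K_{n}^{(3)}=2^{n}+M_{n}^{(2)}$, and the linear relation (\ref{pp3})), but the execution differs at two points. For (\ref{t2}) the paper sums the geometric part and the $M$-part of the Binet formula separately, using a closed form for $\sum_{s=0}^{n}M_{s}^{(2)}$, whereas you reuse (\ref{t1}) at $m=0$ and only case-check the residual $\tfrac{1}{3}\bigl(M_{n+2}^{(2)}+2M_{n}^{(2)}-3M_{n+1}^{(2)}\bigr)$; both reduce to the same mod-$3$ table. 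For (\ref{t3}) the paper first converts $49j_{s}^{(3)}$ into $18K_{s+1}^{(3)}+25K_{s}^{(3)}-10K_{s-1}^{(3)}$ and then sums each piece with (\ref{t1}) (which drags the fractions $25/3$ and $10/3$ through the computation), while you telescope the $j$-recurrence first and only then convert the boundary difference $j_{n+3}^{(3)}-j_{n+1}^{(3)}$ via two applications of (\ref{pp3}); your order of operations is arguably cleaner. Concerning your stated worry: two uses of (\ref{pp3}) give $49\bigl(j_{n+3}^{(3)}-j_{n+1}^{(3)}\bigr)=43K_{n+3}^{(3)}+8K_{n+2}^{(3)}-7K_{n+1}^{(3)}-8K_{n}^{(3)}-36K_{n-1}^{(3)}$, and substituting $36K_{n-1}^{(3)}=18\bigl(K_{n+2}^{(3)}-K_{n+1}^{(3)}-K_{n}^{(3)}\bigr)$ followed by $10K_{n}^{(3)}=5\bigl(K_{n+3}^{(3)}-K_{n+2}^{(3)}-K_{n+1}^{(3)}\bigr)$ lands exactly on $48K_{n+3}^{(3)}-15K_{n+2}^{(3)}+6K_{n+1}^{(3)}=3\bigl(16K_{n+3}^{(3)}-5K_{n+2}^{(3)}+2K_{n+1}^{(3)}\bigr)$, as required.
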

\begin{proof}
(\ref{t1}): Using Eq. (\ref{mean}), we obtain
\begin{align*}
\sum_{s=m}^{n}K_{s}^{(3)}&=K_{m}^{(3)}+K_{m+1}^{(3)}+K_{m+2}^{(3)}+\sum_{s=m+3}^{n}K_{s}^{(3)}\\
&=K_{m}^{(3)}+K_{m+1}^{(3)}+K_{m+2}^{(3)}\\
&\ \ +\sum_{s=m+2}^{n-1}K_{s}^{(3)}+\sum_{s=m+1}^{n-2}K_{s}^{(3)}+2\sum_{s=m}^{n-3}K_{s-3}^{(3)}\\
&=4\sum_{s=m}^{n}K_{s}^{(3)}+K_{m+2}^{(3)}-K_{m}^{(3)}-4K_{n}^{(3)}-3K_{n-1}^{(3)}-2K_{n-2}^{(3)}\\
&=4\sum_{s=m}^{n}K_{s}^{(3)}+K_{m+2}^{(3)}-K_{m}^{(3)}-2K_{n}^{(3)}-K_{n+2}.
\end{align*}
Then, the result in Eq. (\ref{t1}) is completed.

(\ref{t2}): As a consequence of the Eq. (\ref{mod}) of Theorem \ref{binetmod} and $$\sum_{s=0}^{n}M_{s}^{(2)}=\frac{1}{3}\left(M_{n}^{(2)}-M_{n+1}^{(2)}\right)+1,$$ we have 
\begin{align*}
\sum_{s=0}^{n}K_{s}^{(3)}&=\sum_{s=0}^{n}2^{s}+\sum_{s=0}^{n}M_{s}^{(2)}\\
&=2^{n+1}-1+\frac{1}{3}\left(M_{n}^{(2)}-M_{n+1}^{(2)}\right)+1\\
&=K_{n+1}^{(3)}+\frac{1}{3}\left(M_{n}^{(2)}-4M_{n+1}^{(2)}\right)\\
&=\left\{ 
\begin{array}{ccc}
K_{n+1}^{(3)}+2 & \textrm{if} & \mymod{n}{0}{3} \\ 
K_{n+1}^{(3)}+1 & \textrm{if} & \mymod{n}{1}{3}\\
K_{n+1}^{(3)}-3 & \textrm{if} & \mymod{n}{2}{3}
\end{array}
\right. .
\end{align*}
Hence we obtain the result.

(\ref{t3}): Using Eqs. (\ref{pp3}) and (\ref{mean}), we have $49j_{n}^{(3)}=43K_{n}^{(3)}+8K_{n-1}^{(3)}+36K_{n-2}^{(3)}=18K_{n+1}^{(3)}+25K_{n}^{(3)}-10K_{n-1}^{(3)}$. Then, from Eq. (\ref{t1}), we obtain
\begin{align*}
49\sum_{s=0}^{n}j_{s}^{(3)}&=98+18\sum_{s=1}^{n}K_{s+1}^{(3)}+25\sum_{s=1}^{n}K_{s}^{(3)}-10\sum_{s=1}^{n}K_{s-1}^{(3)}\\
&=98+18\sum_{s=2}^{n+1}K_{s}^{(3)}+25\sum_{s=1}^{n}K_{s}^{(3)}-10\sum_{s=0}^{n-1}K_{s}^{(3)}\\
&=98+6\left(K_{n+3}^{(3)}+2K_{n+1}^{(3)}+K_{2}^{(3)}-K_{4}^{(3)}\right)\\
&\ \ +\frac{25}{3}\left(K_{n+2}^{(3)}+2K_{n}^{(3)}+K_{1}^{(3)}-K_{3}^{(3)}\right)\\
&\ \ -\frac{10}{3}\left(K_{n+1}^{(3)}+2K_{n-1}^{(3)}+K_{0}^{(3)}-K_{2}^{(3)}\right)\\
&=16K_{n+3}^{(3)}-5K_{n+2}^{(3)}+2K_{n+1}^{(3)}-49.
\end{align*}
So, the proof is completed. 
\end{proof}

For negative subscripts terms of the sequence of Modified third-order Jacobsthal we can establish the following result:
\begin{proposition}
For a natural number $n$ the following identities are true:
\begin{equation}\label{n1}
K_{-n}^{(3)}=K_{n}^{(3)}+2^{-n}-2^{n},
\end{equation}
\begin{equation}\label{n2}
\sum_{s=0}^{n}K_{-s}^{(3)}=\frac{1}{3}\left(K_{n+2}^{(3)}+2K_{n}^{(3)}\right)-2^{n+1}-2^{-n}+3.
\end{equation}
\end{proposition}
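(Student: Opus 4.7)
The plan is to prove both identities directly from the Binet-style formula of Theorem \ref{binetmod} and the summation identity (\ref{t1}), using one crucial observation about the roots $\omega_1,\omega_2$.

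\medskip

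First I would establish the key fact that $M_{-n}^{(2)}=M_n^{(2)}$. Since $\omega_1,\omega_2$ are the roots of $x^{2}+x+1=0$, by Vieta's formulas we have $\omega_1\omega_2=1$, so $\omega_1^{-1}=\omega_2$ and $\omega_2^{-1}=\omega_1$. Consequently
\[
M_{-n}^{(2)}=\omega_{1}^{-n}+\omega_{2}^{-n}=\omega_{2}^{n}+\omega_{1}^{n}=M_{n}^{(2)}.
\]
Applying the Binet formula $K_{n}^{(3)}=2^{n}+M_{n}^{(2)}$ to index $-n$ then gives
\[
K_{-n}^{(3)}=2^{-n}+M_{-n}^{(2)}=2^{-n}+M_{n}^{(2)}=2^{-n}+\bigl(K_{n}^{(3)}-2^{n}\bigr),
\]
which is exactly (\ref{n1}).

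\medskip

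For (\ref{n2}) I would substitute the identity just proved into the sum and split it:
\[
\sum_{s=0}^{n}K_{-s}^{(3)}=\sum_{s=0}^{n}K_{s}^{(3)}+\sum_{s=0}^{n}2^{-s}-\sum_{s=0}^{n}2^{s}.
\]
The two geometric series evaluate to $2-2^{-n}$ and $2^{n+1}-1$ respectively, so the tail simplifies to $-2^{n+1}-2^{-n}+3$. It remains to compute $\sum_{s=0}^{n}K_{s}^{(3)}$; for this I would apply (\ref{t1}) with $m=0$, and use the initial conditions $K_{0}^{(3)}=K_{2}^{(3)}=3$, which cause the $K_{m}^{(3)}-K_{m+2}^{(3)}$ boundary term to vanish and leave $\sum_{s=0}^{n}K_{s}^{(3)}=\tfrac{1}{3}\bigl(K_{n+2}^{(3)}+2K_{n}^{(3)}\bigr)$. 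Combining gives (\ref{n2}).

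\medskip

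There is no real obstacle here: the only thing one has to notice is that the characteristic polynomial $x^{2}+x+1$ is palindromic (constant term equals leading coefficient), so $\omega_1,\omega_2$ are reciprocals of each other and $M_{n}^{(2)}$ is an even function of $n$. Once this symmetry is in hand, identity (\ref{n1}) reduces the problem for negative indices to known positive-index quantities, and (\ref{n2}) becomes a mechanical combination of (\ref{n1}), two geometric sums, and the $m=0$ case of (\ref{t1}).
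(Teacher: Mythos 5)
Your proposal is correct and follows essentially the same route as the paper: identity (\ref{n1}) is obtained exactly as in the paper from $\omega_{1}\omega_{2}=1$, hence $M_{-n}^{(2)}=M_{n}^{(2)}$, together with the Binet formula. For (\ref{n2}) the paper only remarks that the argument is ``similar to the proof of (\ref{t1}) using (\ref{n1})''; your version, which splits the sum via (\ref{n1}), evaluates the two geometric series, and invokes (\ref{t1}) at $m=0$ (where $K_{0}^{(3)}=K_{2}^{(3)}=3$ kills the boundary term), is the same decomposition carried out explicitly and checks out.
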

\begin{proof}
(\ref{n1}): Since $M_{-n}^{(2)}=M_{n}^{(2)}$, using the Binet formula stated in Theorem \ref{binetmod} and the fact that $\omega_{1}\omega_{2}=1$, all the results of this Proposition follow. In fact,
\begin{align*}
K_{-n}^{(3)}&=2^{-n}+M_{-n}^{(2)}=2^{-n}+M_{n}^{(2)}\\
&=2^{-n}+2^{n}+M_{n}^{(2)}-2^{n}\\
&=K_{n}^{(3)}+2^{-n}-2^{n}.
\end{align*}
So, the proof is completed.

(\ref{n2}): The proof is similar to the proof of Eq. (\ref{t1}) using the Eq. (\ref{n1}).
\end{proof}

\section{Conclusion}
Sequences of numbers have been studied over several years, with emphasis on the well known Tribonacci sequence and, consequently, on the Tribonacci-Lucas sequence. In this paper we have also contributed for the study of Modified third-order Jacobsthal sequence, deducing some formulae for the sums of such numbers, presenting the generating functions and their Binet-style formula. It is our intention to continue the study of this type of sequences, exploring some their applications in the science domain. For example, a new type of sequences in the quaternion algebra with the use of this numbers and their combinatorial properties.

\medskip

\end{document}